\documentclass[12pt]{article}

\usepackage{amssymb}
\usepackage{amsmath}
\usepackage{url}
\usepackage{color}

\usepackage{amsthm}
\theoremstyle{definition}
\newtheorem{defn}{Definition}
\newtheorem{ex}[defn]{Example}
\newtheorem{rem}[defn]{Remark}
\theoremstyle{theorem}

\newtheorem{thm}[defn]{Theorem}


\begin{document}

\title{Switching for 2-designs}
\author{
Dean Crnkovi\'c (\url{deanc@math.uniri.hr})\\
and\\
Andrea \v{S}vob (\url{asvob@math.uniri.hr})\\
Faculty of Mathematics \\
University of Rijeka \\
Radmile Matej\v ci\'c 2, 51000 Rijeka, Croatia
}

\maketitle

\begin{abstract}
In this paper, we introduce a switching for 2-designs, which defines a type of trade. We illustrate this method by applying it to some symmetric $(64,28,12)$ designs, showing that the switching introduced in this paper in some cases can be applied directly to orbit matrices. In that way we obtain six new symmetric $(64,28,12)$ designs. 
Further, we show that this type of switching (of trades) can be applied to any symmetric design related to a Bush-type Hadamard matrix and construct symmetric designs with parameters $(36,15,6)$ leading to new Bush-type Hadamard matrices of order 36, and symmetric $(100,45,20)$ designs yielding Bush-type Hadamard matrices of order 100.

\end{abstract}
\vspace*{0.2cm}
{\bf Keywords:} switching, block design, Hadamard matrix. \\
{\bf 2020 Mathematics Subject Classification:} 05B05, 05B20.

\vspace*{0.2cm}


\section{Introduction and preliminaries}

Switching methods have been successfully used for constructing strongly regular graphs (see \cite{abh, blo, G-McK, Seidel}). Denniston \cite{denniston} used a method called switching ovals for a
construction of symmetric $(25,9,3)$ designs. A similar idea was employed in \cite{orrick}, where Orrick defined switching operations for Hadamard matrices. The switching using Pasch configurations, so called Pasch switch, was used for a construction of new Steiner triple systems from known ones (see \cite{gibbons, g-g}).
Norton \cite{norton}, Parker \cite{parker} and Wanless \cite{wanless} used switching for a construction of Latin squares.
Further, \"Osterg\r ard in \cite{prjo} introduced a switching for codes and Steiner systems.
In this paper, we introduce a switching that can be applied to any 2-design having a set of blocks satisfying certain properties, and give few examples.
In \cite{dj-vdt}, Jungnickel and Tonchev used maximal arcs for a transformation of quasi-symmetric designs that leads to a construction of new quasi-symmetric designs.
This transformation can also be described as switching, but it is different than the switching introduced in this paper.

\bigskip

A $2$-$(v,k, \lambda)$ design is a finite incidence structure ${\mathcal{D}}=(\mathcal{P}, \mathcal{B}, \mathcal{I})$, where $\mathcal{P}$ and $\mathcal{B}$ are disjoint sets and 
$\mathcal{I} \subseteq \mathcal{P} \times \mathcal{B}$, with the following properties:
\begin{enumerate}
  \item $|\mathcal{P}|=v$ and $1< k <v-1$,
  \item every element (block) of $\mathcal{B}$ is incident with exactly $k$ elements (points) of $\mathcal{P}$,
	\item every two distinct points in $\mathcal{P}$ are together incident with exactly $\lambda$ blocks of $\mathcal{B}$.
\end{enumerate}

In a $2$-$(v,k,\lambda )$ design, every point is incident with exactly $\displaystyle r=\frac{\lambda (v-1)}{k-1}$ blocks, and $r$ is called the replication number of a design.
A 2-design is also called a block design. The number of blocks in a block design is denoted by $b$. If $v=b$, a design is called symmetric. 
An isomorphism from one block design to another is a bijective mapping of points to points and blocks to blocks which preserves incidence.
An isomorphism from a block design ${\mathcal{D}}$ onto itself is called an automorphism of ${\mathcal{D}}$. The set of all automorphisms forms a group called the full automorphism group
of ${\mathcal{D}}$ and is denoted by $Aut({\mathcal{D}})$. 
If $\mathcal{D}$ is a block design, the incidence structure $\mathcal{D}'$ having as points the blocks of $\mathcal{D}$, and having as blocks the points of $\mathcal{D}$, where a point and a block are incident in $\mathcal{D}'$ if and only if the corresponding block and a point of $\mathcal{D}$ are incident, is a block design called the dual of $\mathcal{D}$. 
The dual design of a 2-design $\mathcal{D}$ is a 2-design if and only if $\mathcal{D}$ is symmetric. A symmetric design is called self-dual if the design and its dual are isomorphic. 
Let $A$ be the $b\times v$ (block-point) incidence matrix of a 2-design. The rank of $A$ over $GF(p)$ is called the $p$-rank of $A$. For further reading on block designs we refer the reader to 
\cite{bjl}.

The switching introduced in this paper defines a trade. 
A trade for a 2-$(v,k, \lambda)$ design consists of two disjoint sets of blocks with the property that if the design contains the blocks of one of the sets, 
then these blocks can be replaced by the blocks of the other set (see \cite{prjo}). 
For example, if a design has a subdesign, then the subdesign can be replaced by a disjoint subdesign with the same parameters (see \cite{prjo}). 
For more results on trades we refer the reader to \cite{ejb, trades}. In Section \ref{men-64} we show that switching (of trades) introduced in this paper can be applied directly to orbit matrices.

The paper is outlined as follows. In Section \ref{switch}, we introduce a switching for 2-designs. In Section \ref{appl}, we apply the switching introduced to construct new designs from known symmetric 
$(64,28,12)$ designs. Further, we show that this switching can be applied to symmetric designs related to Bush-type Hadamard matrices and illustrate that, by constructing examples of Bush-type Hadamard matrices of orders 36 and 100. Section \ref{con} contains concluding remarks.   

For the computations we have used computer algebra system MAGMA \cite{magma}.

\section{Switching for 2-designs} \label{switch}

In this section, we define a switching for 2-designs and show that an incidence structure obtained from a 2-$(v,k, \lambda)$ design by switching is also a 2-$(v,k, \lambda)$ design.

\begin{defn} \label{defn-switch}
Let ${\mathcal{D}}=(\mathcal{P}, \mathcal{B}, \mathcal{I})$ be a 2-design and let $\mathcal{B}_1 \subset \mathcal{B}$ be a set of blocks such that there are sets of points 
$\mathcal{P}_1, \mathcal{P}_2 \subset \mathcal{P}$ with the following properties:
\begin{enumerate}
 \item $(P,B) \notin \mathcal{I}$, for every $(P,B) \in \mathcal{P}_1 \times \mathcal{B}_1$,
 \item $(P,B) \in \mathcal{I}$, for every $(P,B) \in \mathcal{P}_2 \times \mathcal{B}_1$,
 \item $|\{ B \in \mathcal{B}_1: (P,B) \in \mathcal{I} \}|=|\{ B \in \mathcal{B}_1: (P,B) \notin \mathcal{I} \}|$, for every $P \in \mathcal{P} \backslash (\mathcal{P}_1 \cup \mathcal{P}_2)$.
\end{enumerate}
Then $\mathcal{B}_1$ is called a switching set of ${\mathcal{D}}$. 
\end{defn}

Note that for a switching set $\mathcal{B}_1$ the sets $\mathcal{P}_1$ and $\mathcal{P}_2$ with the above properties are uniquely determined. If $\mathcal{B}_1$ is a switching set of a 2-$(v,k, \lambda)$ design ${\mathcal{D}}=(\mathcal{P}, \mathcal{B}, \mathcal{I})$, we define an incidence structure  ${\mathcal{D}_1}=(\mathcal{P}, \mathcal{B}, \mathcal{I}_1)$ obtained from ${\mathcal{D}}$ by
switching with respect to $\mathcal{B}_1$ in the following way:
\begin{enumerate}
 \item $(P,B) \in \mathcal{I}_1 \Leftrightarrow (P,B) \in \mathcal{I}$, for $B \in \mathcal{B} \backslash \mathcal{B}_1$, $P \in \mathcal{P}$,
 \item $(P,B) \in \mathcal{I}_1 \Leftrightarrow (P,B) \in \mathcal{I}$, for $B \in \mathcal{B}_1$, $P \in \mathcal{P}_1 \cup \mathcal{P}_2$,
 \item $(P,B) \in \mathcal{I}_1 \Leftrightarrow (P,B) \notin \mathcal{I}$, for $B \in \mathcal{B}_1$, $P \in \mathcal{P} \backslash (\mathcal{P}_1 \cup \mathcal{P}_2)$.
\end{enumerate}

\begin{thm} \label{thm-switch}
Let ${\mathcal{D}}=(\mathcal{P}, \mathcal{B}, \mathcal{I})$ be a $2$-$(v,k, \lambda)$ design. If $\mathcal{B}_1$ is a switching set of ${\mathcal{D}}$ then the incidence structure 
${\mathcal{D}_1}=(\mathcal{P}, \mathcal{B}, \mathcal{I}_1)$ obtained from ${\mathcal{D}}$ by switching with respect to $\mathcal{B}_1$ is also a $2$-$(v,k, \lambda)$ design.
\end{thm}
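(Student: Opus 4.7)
My plan is to verify the two defining properties of a $2$-$(v,k,\lambda)$ design for $\mathcal{D}_1$: every block still has size $k$, and every pair of distinct points is still covered by exactly $\lambda$ blocks. Since blocks outside $\mathcal{B}_1$ and incidences between $\mathcal{B}_1$ and $\mathcal{P}_1 \cup \mathcal{P}_2$ are unchanged, the work reduces to analyzing what happens to blocks in $\mathcal{B}_1$ on the point set $\mathcal{P}_3 := \mathcal{P} \setminus (\mathcal{P}_1 \cup \mathcal{P}_2)$, where the incidences get flipped.

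For the block size, I would first extract a consequence of the three properties by double-counting the incidences between $\mathcal{B}_1$ and $\mathcal{P}$ in the original design. Summing block sizes gives $|\mathcal{B}_1|\cdot k$, while summing over points gives $0$ from $\mathcal{P}_1$, $|\mathcal{P}_2|\cdot|\mathcal{B}_1|$ from $\mathcal{P}_2$, and $|\mathcal{P}_3|\cdot|\mathcal{B}_1|/2$ from $\mathcal{P}_3$ (using property 3). This yields the identity $|\mathcal{P}_2| - |\mathcal{P}_1| = 2k - v$. For a fixed $B \in \mathcal{B}_1$, the original block has $0$ points in $\mathcal{P}_1$, $|\mathcal{P}_2|$ points in $\mathcal{P}_2$, and hence $k - |\mathcal{P}_2|$ points in $\mathcal{P}_3$; after flipping the $\mathcal{P}_3$-incidences, the new count of points in $B$ becomes $|\mathcal{P}_2| + (|\mathcal{P}_3| - (k-|\mathcal{P}_2|))$, which simplifies using the identity above to exactly $k$.

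For the pairwise intersection count, I would do a case split on where the two distinct points $P, Q$ lie. The cases where at least one of $P, Q$ is in $\mathcal{P}_1$ or $\mathcal{P}_2$ are essentially trivial, because the incidences between $\mathcal{B}_1$ and $\mathcal{P}_1 \cup \mathcal{P}_2$ are preserved: if a point sits in $\mathcal{P}_1$ it meets no block of $\mathcal{B}_1$ before or after, if it sits in $\mathcal{P}_2$ it meets every block of $\mathcal{B}_1$ before and after, and the count of blocks of $\mathcal{B}_1$ through a point of $\mathcal{P}_3$ is $|\mathcal{B}_1|/2$ by property 3 both before and after flipping. The only substantive case is $P, Q \in \mathcal{P}_3$, which is the main obstacle.

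For that last case, let $a, b, c, d$ count the blocks of $\mathcal{B}_1$ that originally contain, respectively, both, only $P$, only $Q$, and neither of $P$ and $Q$. After switching the $\mathcal{P}_3$-incidences, the blocks of $\mathcal{B}_1$ containing both $P$ and $Q$ in $\mathcal{D}_1$ are exactly the $d$ blocks that originally contained neither. Property 3 gives $a+b = c+d = |\mathcal{B}_1|/2$ (applied to $P$) and $a+c = b+d = |\mathcal{B}_1|/2$ (applied to $Q$); subtracting these forces $d = a$, so the number of blocks through both points is unchanged. Combined with the unchanged contribution from $\mathcal{B} \setminus \mathcal{B}_1$, this gives exactly $\lambda$ blocks through $\{P,Q\}$ in $\mathcal{D}_1$, completing the proof.
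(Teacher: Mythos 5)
Your proposal is correct and follows essentially the same route as the paper's proof: block sizes are verified by double-counting the incidences between $\mathcal{B}_1$ and $\mathcal{P}\setminus(\mathcal{P}_1\cup\mathcal{P}_2)$, and the pair counts reduce to the case of two points outside $\mathcal{P}_1\cup\mathcal{P}_2$, where property 3 forces the number of blocks of $\mathcal{B}_1$ containing neither point to equal the number containing both. Your $a,b,c,d$ bookkeeping just makes explicit the step the paper asserts directly.
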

\begin{proof}
Let us show that all blocks of ${\mathcal{D}_1}$ are incident with precisely $k$ points. It is clear that a block $B \in \mathcal{B} \backslash \mathcal{B}_1$ of ${\mathcal{D}_1}$ is incident with $k$
points. Let $B$ belong to the switching set $\mathcal{B}_1$. The block $B$ is incident in ${\mathcal{D}}$ with $k-|\mathcal{P}_2|$ points from $\mathcal{P} \backslash (\mathcal{P}_1 \cup \mathcal{P}_2)$. 
Further, it follows directly from the definition of a switching set that
$$|\{(P,B) \in \mathcal{I}: P \in \mathcal{P} \backslash (\mathcal{P}_1 \cup \mathcal{P}_2), B \in \mathcal{B}_1 \}|= 
\frac{|\mathcal{P} \backslash (\mathcal{P}_1 \cup \mathcal{P}_2) \times \mathcal{B}_1|}{2}.$$
Therefore, 
$$k-|\mathcal{P}_2|=\frac{|\mathcal{P} \backslash (\mathcal{P}_1 \cup \mathcal{P}_2)|}{2}.$$
It follows that the block $B$ is incident in ${\mathcal{D}_1}$ with $k-|\mathcal{P}_2|$ points from the set $\mathcal{P} \backslash (\mathcal{P}_1 \cup \mathcal{P}_2)$. 
Hence, any block from ${\mathcal{B}_1}$ is incident in ${\mathcal{D}_1}$ with $k$ points.

It remains to prove that every two distinct points are together incident in ${\mathcal{D}_1}$ with $\lambda$ blocks. It is obvious that two points from $\mathcal{P}_1 \cup \mathcal{P}_2$ 
are together incident with exactly $\lambda$ blocks. Further, two points, one from $\mathcal{P}_1 \cup \mathcal{P}_2$ and one from $\mathcal{P} \backslash (\mathcal{P}_1 \cup \mathcal{P}_2)$,
are also together incident with $\lambda$ blocks. Let $P_1$ and $P_2$ be points from the set $\mathcal{P} \backslash (\mathcal{P}_1 \cup \mathcal{P}_2)$. Denote by $x$ the number of blocks
from $\mathcal{B}_1$ that are incident in ${\mathcal{D}}$ with $P_1$ and $P_2$. Then there are $x$ blocks of $\mathcal{B}_1$ that are not incident in ${\mathcal{D}}$ neither with $P_1$ nor with $P_2$.
Hence, there are $x$ blocks of $\mathcal{B}_1$ that are incident in ${\mathcal{D}_1}$ with $P_1$ and $P_2$. Since there are $\lambda-x$ blocks from $\mathcal{B} \backslash \mathcal{B}_1$ that 
are incident with $P_1$ and $P_2$ in both designs ${\mathcal{D}}$ and ${\mathcal{D}_1}$, it follows that $P_1$ and $P_2$ are together incident with $\lambda$ blocks in ${\mathcal{D}_1}$.
So, ${\mathcal{D}_1}$ is a $2$-$(v,k, \lambda)$ design.
\end{proof}

Obviously, if a design ${\mathcal{D}}_1$ is obtained from ${\mathcal{D}}$ by switching with respect to $\mathcal{B}_1$, 
then ${\mathcal{D}}$ can be obtained form ${\mathcal{D}_1}$ also by switching with respect to $\mathcal{B}_1$.
If 2-designs ${\mathcal{D}}$ and ${\mathcal{D}}_1$ can be obtained from each other by switching, then ${\mathcal{D}}$ and ${\mathcal{D}}_1$ are said to be switching-equivalent.

\begin{rem} \label{rem-trades}
If ${\mathcal{B}}_1$ is a switching set of a symmetric 2-design ${\mathcal{D}}$, then the incident structure with the point set ${\mathcal{B}}_1$
and the block set $\mathcal{P} \backslash (\mathcal{P}_1 \cup \mathcal{P}_2)$ is a 2-design which is a subdesign of the dual design of ${\mathcal{D}}$.
\end{rem}

\begin{rem}
Clearly, the size of a switching set is an even number. Note that any pair of blocks of a 2-design form a switching set. However, if ${\mathcal{D}}$ and ${\mathcal{D}_1}$ are switching-equivalent with respect to a switching set of size two, then they are isomorphic. Switching with respect to switching sets of size greater than two usually produce non-isomorphic designs.
\end{rem}

\section{Application to some symmetric designs} \label{appl}

In this section, we apply the method introduced to symmetric designs obtained from Bush-type Hadamard matrices, and to certain symmetric designs with parameters $(64,28,12)$.
It is clear that the switching does not have to preserve the full automorphism group of a design. The examples given in this section show that the switching introduced in this paper does not preserve 
$p$-rank of a 2-design, in case when $p$ divides the order of the design, and also does not preserve the self-duality of a symmetric design.

\subsection{Examples from symmetric $(64,28,12)$ designs} \label{men-64}

A Hadamard matrix of order $m$ is an $(m \times m)$-matrix $H=( h_{i,j})$, $h_{i,j} \in \{ -1,1 \}$, satisfying $HH^T=H^TH=mI_m$, where $I_m$ is the unit matrix of order $m$.  
Two Hadamard matrices are said to be equivalent if one can be obtained from the other by negating rows or columns, or by interchanging rows or columns.
A Hadamard matrix is regular if the row and column sums are constant.
It is well known that the existence of a symmetric design with parameters $(4u^2, 2u^2-u, u^2-u)$ is equivalent to the existence of a regular Hadamard matrix of order $4u^2$ 
(see \cite[Theorem 1.4 p. 280]{w}). Symmetric designs with parameters $(4u^2, 2u^2-u, u^2-u)$ are called Menon designs.

Symmetric $(64,28,12)$ designs are related to regular Hadamard matrices of order 64. To illustrate the method introduced in this paper, we will apply the switching to some of the symmetric $(64,28,12)$ designs 
described in \cite{dc-mop}. The examples where done by computer, using the computer algebra system MAGMA \cite{magma}.

\begin{ex}
It is shown in \cite{dc-mop} that up to isomorphism there are exactly two orbit matrices for the action of the Frobenius group of order $21$, i.e., the nonabelian group of order 21, 
on a symmetric $(64,28,12)$ design so that its cyclic subgroup of order $3$ fixes exactly $10$ points. 
These block-by-point orbit matrices $M_1$ and $M_2$ are given below, where the first row and the first column gives the orbit lengths. The designs obtained from $M_1$ 
are not isomorphic to the designs obtained from $M_2$, but we will show that they are related by switching. 

{\tiny
\begin{center}
\begin{tabular}{c|cccccccccc}
$M_1$ & 1 & 7 & 7 & 7 & 7 & 7 & 7 & 7 & 7 & 7 \\ \hline
  1 & 0 & 7 & 7 & 7 & 7 & 0 & 0 & 0 & 0 & 0 \\
  7 & 1 & 4 & 4 & 4 & 0 & 3 & 3 & 3 & 3 & 3 \\
  7 & 1 & 4 & 4 & 0 & 4 & 3 & 3 & 3 & 3 & 3 \\
  7 & 1 & 4 & 0 & 4 & 4 & 3 & 3 & 3 & 3 & 3 \\
  7 & 1 & 0 & 4 & 4 & 4 & 3 & 3 & 3 & 3 & 3 \\
  7 & 0 & 3 & 3 & 3 & 3 & 4 & 4 & 4 & 4 & 0 \\
  7 & 0 & 3 & 3 & 3 & 3 & 4 & 4 & 4 & 0 & 4 \\
  7 & 0 & 3 & 3 & 3 & 3 & 4 & 4 & 0 & 4 & 4 \\
  7 & 0 & 3 & 3 & 3 & 3 & 4 & 0 & 4 & 4 & 4 \\
  7 & 0 & 3 & 3 & 3 & 3 & 0 & 4 & 4 & 4 & 4 \\
\end{tabular} \qquad
\begin{tabular}{c|cccccccccc}
$M_2$ & 1 & 7 & 7 & 7 & 7 & 7 & 7 & 7 & 7 & 7 \\ \hline
  1 & 0 & 7 & 7 & 7 & 7 & 0 & 0 & 0 & 0 & 0 \\
  7 & 1 & 3 & 3 & 3 & 3 & 4 & 4 & 4 & 3 & 0 \\
  7 & 1 & 3 & 3 & 3 & 3 & 4 & 4 & 0 & 3 & 4 \\
  7 & 1 & 3 & 3 & 3 & 3 & 4 & 0 & 4 & 3 & 4 \\
  7 & 1 & 3 & 3 & 3 & 3 & 0 & 4 & 4 & 3 & 4 \\
  7 & 0 & 4 & 4 & 4 & 0 & 3 & 3 & 3 & 4 & 3 \\
  7 & 0 & 4 & 4 & 0 & 4 & 3 & 3 & 3 & 4 & 3 \\
  7 & 0 & 4 & 0 & 4 & 4 & 3 & 3 & 3 & 4 & 3 \\
  7 & 0 & 0 & 4 & 4 & 4 & 3 & 3 & 3 & 4 & 3 \\
	7 & 0 & 3 & 3 & 3 & 3 & 4 & 4 & 4 & 0 & 4 \\
\end{tabular}
\end{center}
}
As shown in \cite{dc-mop}, up to isomorphism there are $10$ mutually non-isomorphic symmetric designs with parameters $(64,28,12)$ admitting the action of the Frobenius group of order $21$ so that the cyclic group of order $3$ fixes $10$ points and blocks. The orbit matrix $M_1$ leads to 3 symmetric designs, denoted by ${\cal D}_1$, ${\cal D}_2$ and ${\cal D}_3$, 
and the orbit matrix $M_2$ corresponds to 7 symmetric designs, denoted by  ${\cal D}_4, \dots ,{\cal D}_{10}$. 
It can be seen from the orbit matrix $M_1$ that the fixed blocks of ${\cal D}_1$, ${\cal D}_2$ and ${\cal D}_3$, together with any of the orbits of length 7
corresponding to the last 5 rows of $M_1$, form switching sets of size 8. Further, the fixed blocks of the designs ${\cal D}_4, \dots ,{\cal D}_{10}$, together with the orbit of length 7
corresponding to the last row of $M_2$, also form switching sets of size 8. Switching of the designs ${\cal D}_1$, ${\cal D}_2$ and ${\cal D}_3$ with respect to the described switching sets
produce designs ${\cal D}_4, \dots ,{\cal D}_{10}$. The designs ${\cal D}_4, \dots ,{\cal D}_{10}$ have 2-rank 26, and the designs ${\cal D}_4, \dots ,{\cal D}_{10}$ have 2-rank 27. That means that
the switching does not preserve $p$-rank of a design. Further, the designs ${\cal D}_1$, ${\cal D}_2$, ${\cal D}_3$, ${\cal D}_4$, ${\cal D}_9$ and ${\cal D}_{10}$ are self-dual, while 
$({\cal D}_5,{\cal D}_7)$ and $({\cal D}_6,{\cal D}_8)$ are pairs of mutually dual designs. It means that the property of being self-dual is not preserved under the switching introduced in this paper. 
\end{ex}

\begin{ex}
Up to isomorphism, there are nine orbit matrices for the Frobenius group of order $21$ acting on a symmetric $(64,28,12)$ design in a way that its cyclic subgroup of order $3$ fixes $7$ points 
(see \cite{dc-mop}). One of these nine orbit matrices is the orbit matrix $M_3$ given below.

{\tiny
\begin{center}
\begin{tabular}{c|c c c c c c c c}
$M_3$ & 1 & 7 & 7 & 7 & 7 & 7 & 7 & 21\\ \hline
1  & 0 & 7 & 7 & 7 & 7 & 0 & 0 &  0 \\
7  & 1 & 4 & 4 & 4 & 0 & 3 & 3 &  9 \\
7  & 1 & 4 & 4 & 0 & 4 & 3 & 3 &  9 \\
7  & 1 & 4 & 0 & 4 & 4 & 3 & 3 &  9 \\
7  & 1 & 0 & 4 & 4 & 4 & 3 & 3 &  9 \\
7  & 0 & 3 & 3 & 3 & 3 & 4 & 0 & 12 \\
7  & 0 & 3 & 3 & 3 & 3 & 0 & 4 & 12 \\
21 & 0 & 3 & 3 & 3 & 3 & 4 & 4 &  8 \\
\end{tabular} 
\qquad
\begin{tabular}{c|c c c c c c c c c c}
$M_3'$ & 1 & 7 & 7 & 7 & 7 & 7 & 7 & 7 & 7 & 7 \\ \hline
1  & 0 & 7 & 7 & 7 & 7 & 0 & 0 & 0 & 0 & 0 \\
7  & 1 & 4 & 4 & 4 & 0 & 3 & 3 & 3 & 3 & 3 \\
7  & 1 & 4 & 4 & 0 & 4 & 3 & 3 & 3 & 3 & 3 \\
7  & 1 & 4 & 0 & 4 & 4 & 3 & 3 & 3 & 3 & 3 \\
7  & 1 & 0 & 4 & 4 & 4 & 3 & 3 & 3 & 3 & 3 \\ 
7  & 0 & 3 & 3 & 3 & 3 & 4 & 0 & 4 & 4 & 4 \\
7  & 0 & 3 & 3 & 3 & 3 & 0 & 4 & 4 & 4 & 4 \\
7  & 0 & 3 & 3 & 3 & 3 & 4 & 4 & 4 & 4 & 0 \\
7  & 0 & 3 & 3 & 3 & 3 & 4 & 4 & 0 & 4 & 4 \\
7  & 0 & 3 & 3 & 3 & 3 & 4 & 4 & 4 & 0 & 4 \\
\end{tabular}
\end{center}
}

If the Frobenius group of order $21$ acts on a symmetric $(64,28,12)$ design yielding the orbit matrix $M_3$, then its subgroup of order 7 acts on that designs producing the orbit matrix $M_3'$.
The orbit matrix $M_3$ leads to three symmetric $(64,28,12)$ designs, denoted in \cite{dc-mop} by ${\cal D}_{27}$, ${\cal D}_{28}$ and ${\cal D}_{29}$.
Any of the last five orbits for $Z_7$ of the designs ${\cal D}_{27}$, ${\cal D}_{28}$ and ${\cal D}_{29}$, together with the fixed block, form a switching set of size 8.
Switching of ${\cal D}_{27}$, ${\cal D}_{28}$ and ${\cal D}_{29}$ with respect to the switching sets obtained from last three orbits of length 7 (that belong to the orbit of length 21 of the Frobenius group of order 21) produce symmetric $(64,28,12)$ designs that are not isomorphic to the designs obtained in \cite{dc-mop}. From each of the designs ${\cal D}_{27}$, ${\cal D}_{28}$ and ${\cal D}_{29}$
we obtain, up to isomorphism, one new design denoted by ${\cal D}_{27}'$, ${\cal D}_{28}'$ and ${\cal D}_{29}'$, respectively. The designs ${\cal D}_{27}'$, ${\cal D}_{28}'$ and ${\cal D}_{29}'$
are pairwise non-isomorphic. While the designs ${\cal D}_{27}$, ${\cal D}_{28}$ and ${\cal D}_{29}$ are self-dual, the newly obtained designs ${\cal D}_{27}'$, ${\cal D}_{28}'$ and ${\cal D}_{29}'$
are not self-dual, and together with their duals give us six designs that are not isomorphic to the designs obtained in (see \cite{dc-mop}). 
The full automorphism group of ${\cal D}_{27}'$ is isomorphic to $Z_7 \times Z_2$, and the full automorphism groups of ${\cal D}_{28}'$ and ${\cal D}_{29}'$ are isomorphic to $Z_7$.
While the designs ${\cal D}_{27}$, ${\cal D}_{28}$ and ${\cal D}_{29}$ have 2-rank equal to 26, the 2-rank of any of the design ${\cal D}_{28}'$ and ${\cal D}_{29}'$ is 27.
\end{ex}

\begin{rem}
The designs ${\cal D}_{27}'$, ${\cal D}_{28}'$ and ${\cal D}_{29}'$ are not isomorphic to the designs described in \cite{dilon, kraemer, p-s-t}, since all these designs have the full automorphism groups 
of order divisible by $2^6$. Further, none of the derived designs (with parameters $2$-$(28,12,11)$) of ${\cal D}_{27}'$, ${\cal D}_{28}'$ and ${\cal D}_{29}'$ and their duals is quasi-symmetric, and therefore
the designs ${\cal D}_{27}'$, ${\cal D}_{28}'$ and ${\cal D}_{29}'$ and their duals are not isomorphic to the designs obtained in \cite{ding&al, l-t-t}.
The designs ${\cal D}_{27}'$, ${\cal D}_{28}'$ and ${\cal D}_{29}'$ and their duals are also not isomorphic to the symmetric $(64,28,12)$ design from \cite{dc}.
Therefore, the designs ${\cal D}_{27}'$, ${\cal D}_{28}'$ and ${\cal D}_{29}'$ and their duals are possibly not isomorphic to the previously known symmetric $(64,28,12)$ designs.
\end{rem}

\subsection{Symmetric designs related to Bush-type Hadamard matrices} \label{bush-type}

We will show that the switching introduced in this paper can be applied to any symmetric design obtained from a Bush-type Hadamard matrix.

A Bush-type Hadamard matrix of order $4n^2$ is a Hadamard matrix with the additional property of being a block matrix  $H=[H_{i,j}]$ with blocks of size $2n \times 2n$, such that
$H_{i,i}=J_{2n}$ and $H_{i,j}J_{2n}=J_{2n}H_{i,j}=0,\ i \neq j,$ $1 \le i \le 2n,\ 1 \le j \le 2n$, where $J_{2n}$ is the all-ones $(2n \times 2n)$-matrix.

Bush-type Hadamard matrices of order $16n^2$ exist for all values of $n$ for which a Hadamard matrix of order $4n$ exists (see \cite{kh}). However, it
is very difficult to decide whether such matrices of order $4n^2$ exist if $n$ is an odd integer, $n>1$. Bush-type Hadamard matrices of order $4n^2$, where $n$ is an odd integer, 
have been constructed for $n=3,5,9$ (see \cite{janko-36, jkt, b-t-324}). Later on, M. Muzychuk and Q. Xiang gave a construction of Bush-type Hadamard matrices of order $4n^4$ for any odd $n$ \cite{m-x}.
While there are at least 52432 symmetric $(100,45,20)$ designs corresponding to Bush-type Hadamard matrices of order 100 (see \cite{dc-dh}),
the only known symmetric designs with parameters $(36,15,6)$ and $(324,153,72)$ corresponding to Bush-type Hadamard matrices of order 36 and 324, respectively, are two designs corresponding to 
Bush-type Hadamard matrices of order 36 given in \cite{janko-36, janko-hadi}, and one design corresponding to Bush-type Hadamard matrices of order 324 given in \cite{b-t-324}.

H. Kharaghani \cite{kh} showed that a Bush-type Hadamard matrix of order $4n^2$ with $2n-1$ or $2n+1$ a prime power, can be used to construct infinite classes of symmetric designs.
Further, Janko and Kharaghani constructed strongly regular graphs with parameters $(936,375,150,150)$ and $(1800,1029,588,588)$ from a block negacyclic Bush-type Hadamard matrix of order 36 
(see \cite{janko-hadi}).

Bush-type Hadamard matrices are regular. By replacing the entries 1 of a Bush-type Hadamard matrix of order $4n^2$ with $0$, and the entries equal to $-1$ with $1$, one gets the incidence
matrix of a symmetric design with parameters $(4n^2, 2n^2-n, n^2-n)$. The diagonal blocks of these incidence matrices are zero matrices, and the off-diagonal blocks have the same number of 1s and 0s in
each row and column. Hence, each $2n \times 2n$ diagonal block of the Menon design corresponding to a Bush-type Hadamard matrix of order $4n^2$ determines a switching set, which give us $2n$ switching
sets. Each of these $2n$ switching sets can be used for switching, producing $2^n-1$ designs that are switching-equivalent with the starting design. Note that these $2^n-1$ designs also correspond to
Bush-type Hadamard matrices.

Bush-type Hadamard matrices of order $4n^2$, where $n$ is an odd prime, have been constructed only for $n=3,5$.  In Examples
\ref{Bush36-negacyclic}, \ref{Bush36-Z3} and \ref{Bush100}, we construct new symmetric designs corresponding to Bush-type Hadamard matrices of order 36 and 100.

\begin{ex} \label{Bush36-negacyclic}
To illustrate the construction method, we use the block negacyclic Bush-type Hadamard matrix of order 36 from \cite{janko-hadi}. 
The six diagonal blocks of that Bush-type Hadamard matrix determine six switching sets of the corresponding Menon design, and switching leads us to 64 symmetric $(36,15,6)$ designs 
(including the starting one). These 64 designs are pairwise non-isomorphic, and all of them have trivial full automorphism group. 
Any of these 64 symmetric $(36,15,6)$ designs can be used to construct two infinite classes of symmetric designs, as shown in \cite{janko-36}. Note that the switching with respect to all six
switching sets produces a design which corresponds to a Bush-type Hadamard matrix of order 36 that is also block negacyclic. 

For a prime $p$, $p$-rank of the incidence matrix of a 2-$(v, k, \lambda)$ design can be smaller than $v-1$ only if $p$ divides the order of a design, i.e. if $p$ divides $k - \lambda$ in
the case of a symmetric design (see \cite{hamada}). The symmetric $(36,15,6)$ design constructed in \cite{janko-hadi} has 3-rank equal to 15, while 10 of the designs obtained by switching have 
3-rank equal to 16, 28 of them have 3-rank 17, and 18 designs have 3-rank equal to 18. 

The 64 Bush-type Hadamard matrices corresponding to the 64 Menon designs obtained in this example form 14 equivalence classes.
\end{ex}

\begin{ex} \label{Bush36-Z3}
In this example, we use the Bush-type Hadamard matrix of order 36 from \cite{janko-36}. The six diagonal blocks of that Bush-type Hadamard matrix determine six switching sets of the corresponding design, and switching leads us to 64 symmetric $(36,15,6)$ designs 
(including the starting one). In total, 24 designs of these 64 designs are pairwise non-isomorphic, 20 of them have trivial full automorphism group while 4 of them have the automorphism group of order 3.
Any of these 24 symmetric $(36,15,6)$ designs can be used to construct two infinite classes of symmetric designs, as shown in \cite{janko-36}. 
The symmetric $(36,15,6)$ design constructed in \cite{janko-36} has 3-rank equal to 16, one design obtained by switching has 
3-rank equal to 15, 4 of them have 3-rank 16, 10 of them have 3-rank equal to 17 and 8 designs have 3-rank equal to 18. 

The 24 Bush-type Hadamard matrices corresponding to the 24 symmetric $(36,15,6)$ designs obtained in this example form 16 equivalence classes.
\end{ex}

\begin{ex} \label{Bush100}
In this example, we use the Bush-type Hadamard matrix of order 100 from \cite{jkt}. The ten diagonal blocks of that Bush-type Hadamard matrix determine ten switching sets of the corresponding design, and switching leads us to 1024 symmetric $(100,45,20)$ designs 
(including the starting one). In total, 208 of these 1024 designs are pairwise non-isomorphic, 204 of them have the full automorphism group of size 20, while 4 of them have the automorphism group of order 100. Two designs have 5-rank equal to 38, 4 of them have 5-rank equal to 39, 20 of the designs have 5-rank equal to 40, 64 of them have 5-rank 41 and 118 designs have 5-rank equal to 42. 

The 208 Bush-type Hadamard matrices corresponding to the 208 symmetric $(100,45,20)$ designs obtained in this example, including the design constructed in \cite{jkt}, form 120 equivalence classes.
\end{ex}

\section{Conclusion} \label{con}

In this paper, we introduce a switching for 2-designs, and show that this switching can be used to produce non-isomorphic designs. In that way, we obtain six symmetric $(64,28,12)$ designs. Further, we construct 86 pairwise non-isomorphic symmetric $(36,15,6)$ designs leading to 28 new pairwise nonequivalent Bush-type Hadamard matrices of order 36, and 207 pairwise non-isomorphic symmetric $(100,45,20)$ designs leading to 119 pairwise nonequivalent Bush-type Hadamard matrices of order 100.
Moreover, the switching applied to Menon designs in Section \ref{appl} show that the switching
may lead to construction of inequivalent Hadamard matrices. The examples given in this paper show that the switching does not preserve 
$p$-rank of a 2-design, in case when $p$ divides the order of the design, and also does not preserve the self-duality of a symmetric design.

\section{Statements and Declarations}

\subsection{Funding}
This work has been fully supported by {\rm C}roatian Science Foundation under the projects 6732 and 5713.


\begin{thebibliography}{35}

\bibitem{abh}
A. Abiad, S. Butler, W. H. Haemers, Graph switching, 2-ranks, and graphical Hadamard matrices, Discrete Math. 342 (2019), 2850--2855. 

\bibitem{blo}
M. Behbahani, C. Lam, P. R. J. \"Osterg\r ard, On triple systems and strongly regular graphs, J. Combin. Theory Ser. A 119 (2012), 1414--1426.

\bibitem{bjl}
T. Beth, D. Jungnickel, H. Lenz, Design Theory, 2nd Edition, Cambridge University Press, Cambridge, 1999.

\bibitem{ejb}
E. J. Billington, Combinatorial trades: a survey of recent results, in: W. D. Wallis (Ed.), Designs 2002: Further Computational and Constructive Design Theory, Kluwer, Boston, 2003, 47--67.

\bibitem{magma}
W. Bosma, J. Cannon, Handbook of Magma Functions, Department of Mathematics, University of Sydney, 1994. (http://magma.maths.usyd.edu.au/magma)

\bibitem{dc}
D. Crnkovi\'c, A series of Menon designs and 1-rotational designs, Finite Fields Appl. 13 (2007), 1001--1005.

\bibitem{dc-dh}
D. Crnkovi\'c, D. Held, Some new Bush-type Hadamard matrices of order 100 and infinite classes of symmetric designs, J. Combin. Math. Combin. Comput. 47 (2003), 155--164. 

\bibitem{dc-mop}
D. Crnkovi\'c, M.-O. Pav\v cevi\'c, Some new symmetric designs with parameters (64,28,12), Discrete Math. 237 (2001), 109--118.

\bibitem{dilon}
J. F. Dillon, A survey of difference sets in 2-groups, Proc. Marshall Hall Memorial Conf., Burlington (1990).

\bibitem{ding&al}
Y. Ding, S. Houghten, C. Lam, S. Smith, L. Thiel, V. D. Tonchev, Quasi-symmetric 2-(28,12,11) designs with an automorphism of order 7, J. Combin. Des. 6 (1998), 213--223.

\bibitem{denniston}
R. H. F. Denniston, Enumeration of symmetric designs (25,9,3), in: Algebraic and geometric combinatorics, North-Holland Math. Stud. 65, North-Holland, Amsterdam, 1982, 111--127.

\bibitem{gibbons}
P. B. Gibbons, Computing techniques for the construction and analysis of block designs, Techn. Rept. \# 92, Dept. Computer Sci., Univ. Toronto 92 (1976).

\bibitem{g-g}
M. J. Grannell, T. S. Griggs, Pasch configuration, in: M. Hazewinkel (Ed.), Encyclopaedia of Mathematic, Supplement III, Kluwer Academic Publishers, 2001, 299--300.


\bibitem{G-McK}
C. D. Godsil, B. D. McKay, Constructing cospectral graphs, Aequationes Math. 25 (1982), 257--268.

\bibitem{hamada}
N. Hamada, On the $p$-rank of the incidence matrix of a balanced or partially balanced incomplete block design and it applications to error correcting codes, Hiroshima Math. J. 3 (1973), 153--226.

\bibitem{trades}
A. S. Hedayat, G. B. Khosrovshahi, Trades, in: C. J. Colbourn, J. H. Dinitz (Eds.), Handbook of Combinatorial Designs, 2nd ed., Chapman \& Hall, CRC, Boca Raton, 2007, 644--648.

\bibitem{janko-36}
Z. Janko, The existence of a Bush-type Hadamard matrix of order 36 and two new infinite classes of symmetric designs, J. Combin. Theory Ser. A 95 (2001), 360--364. 

\bibitem{janko-hadi}
Z. Janko, H. Kharaghani, A block negacyclic Bush-type Hadamard matrix and two strongly regular graphs, J. Combin. Theory Ser. A 98 (2002), 118--126. 

\bibitem{jkt}
Z. Janko, H. Kharaghani, V. D. Tonchev, Bush-type Hadamard matrices and symmetric designs, J. Combin. Des. 9 (2001), 72--78. 

\bibitem{b-t-324}
Z. Janko, H. Kharaghani, V. D. Tonchev, The existence of a Bush-type Hadamard matrix of order 324 and two new infinite classes of symmetric designs, Des. Codes Cryptogr. 24 (2001), 225--232. 

\bibitem{dj-vdt}
D. Jungnickel, V. D. Tonchev, Exponential Number of Quasi-Symmetric SDP Designs and Codes Meeting the Grey-Rankin Bound, Des. Codes Cryptogr. 1 (1991), 247--253.

\bibitem{kh}
H. Kharaghani, On the twin designs with the Ionin-type parameters, Electron. J. Combin. 7 (2000), $\#$R1.

\bibitem{kraemer}
R. G. Kraemer, Proof of a conjecture on Hadamard 2-groups, J. Combin. Theory Ser. A 63 (1994), 1--10.

\bibitem{l-t-t}
C. Lam, L. Thiel, V. D. Tonchev, On Quasi-symmetric 2-(28,12,11) and 2-(36,16,12) designs, Des. Codes Cryptogr. 5 (1995), 43--55.

\bibitem{m-x}
M. Muzychuk, Q. Xiang, Bush-type Hadamard matrices of order $4m^4$ exist for all odd $m$, Proc. Amer. Math. Soc. 134 (2006), 2197--2204.

\bibitem{norton}
H. W. Norton, The $7 \times 7$ squares, Ann. Eugenics 9 (1939), 269–-307.

\bibitem{orrick}
W. P. Orrick, Switching operations for Hadamard matrices, SIAM J. Discrete Math. 22 (2008), 31-50.

\bibitem{prjo}
P. R. J. \"Osterg\r ard, Switching codes and designs, Discrete Math. 312 (2012), 621--632.

\bibitem{parker}
E. T. Parker, Computer investigation of orthogonal Latin squares of order ten, in: Proc. Sympos. Appl. Math. vol. XV, Amer. Math. Soc., Providence, 1963, 73–-81.

\bibitem{p-s-t}
C. Parker, E. Spence, V. D. Tonchev, Designs with the symmetric difference propety on 64 points and their groups, J. Combin. Theory Ser. A 67 (1994), 23--43.

\bibitem{Seidel}
J. J. Seidel, Graphs and two-graphs, in: Proceedings of the Fifth Southeastern Conference on Combinatorics, Graph Theory and Computing (Florida Atlantic Univ., Boca Raton, Fla., 1974), Congressus Numerantium X, Utilitas Math., Winnipeg, Man., 1974, 125--143.

\bibitem{w}
W. D. Wallis, A. P. Street, J. S. Wallis, Combinatorics: Room Squares, Sum-Free Sets, Hadamard matrices, Springer Verlag, Berlin-Heidelberg-New York, 1972.

\bibitem{wanless}
I. M. Wanless, Cycle switches in Latin squares, Graphs Combin. 20 (2004), 545–-570.


\end{thebibliography}
\end{document}